\newtheorem{fact}{Theorem}
\newtheorem{theorem}{Theorem}[section]
\newtheorem{word}[fact]{Definition}
\newtheorem{example}{Example}
\newtheorem{corollary}{Corollary}[section]
\newtheorem{lemma}{Lemma}[section]
\begin{document}
\title{\bf Characterizations of the position value for hypergraph communication situations}
\author{Erfang Shan\thanks{{\em Corresponding authors.} Email address: efshan@shu.edu.cn (E. Shan), guangzhang@shu.edu.cn (G. Zhang)}, \,
Guang Zhang\\
{\small School of Management, Shanghai University, Shanghai
200444, P.R. China}
}
\date{}
\maketitle \baselineskip 17pt
\begin{abstract}

We characterize the position value for arbitrary hypergraph communication situations.
The position value is first presented by the Shapley value of the
  uniform hyperlink game or the  $k$-augmented uniform hyperlink game, which are obtained from the given hypergraph
communication situation. These results generalize the  non-axiomatic characterization of the position value for communication situations in \cite{ko} (Int J Game Theory (2010) 39: 669--675)
to hypergraph communication situations.
 Based on the non-axiomatic characterization, we further provide an axiomatic characterization of the position value for arbitrary hypergraph communication situations by employing component efficiency and a new property, named partial balanced conference contributions. The partial balanced conference contributions is developed from balanced link contributions in \cite{sl} (Int J Game
Theory (2005) 33: 505-514).

\bigskip
 \noindent {\bf Keywords}: Hypergraph communication situation; Position value; Characterization

 \medskip

\noindent {\bf JEL classification:} C71
\end{abstract}
\section{Introduction}

The study of TU-games with limited cooperation presented by means of a communication graph was initiated by \cite{my1},
and an allocation rule for such games, the so-called {\em Myerson value}, was also introduced simultaneously.
Later on, various studies in this direction were done in the past nearly forty years, such as \cite{Me}, \cite{he}, \cite{blp}, \cite{bkl}, \cite{brs} and \cite{szd}. Among them, the allocation rule, named {\em position value} (\cite{Me}), is also widely studied for (graph) communication situations.
\cite{Bo} provided a characterization of the position value for (graph) communication situations with trees.
An elegant characterization of this rule for arbitrary (graph) communication situations was given by \cite{sl}. While \cite{van} extended the position value to TU-games with hypergraph communication situations, shortly hypergraph communication situations.
 They also gave an axiomatic characterization of the position value for cycle-free hypergraph communication situations.
\cite{al} extended the position value to union stable systems and characterized it for a subclass of such systems.
However, an axiomatic characterization of the position value for arbitrary hypergraph communication situations has not yet
been found and remains an open problem.
Additionally, the approach of non-axiomatic characterization for the position value, which deserves to be mentioned, was investigated in \cite{ca1} and \cite{ko}, respectively. \cite{ca1} gave a characterization of the
position value by the Myerson value of a modification of communication situations,
called the {\em  link agent form} (LAF) on graph situations and the {\em hyperlink agent form} (HAF) on hypergraph situations; While, \cite{ko} provided unified and non-axiomatic characterizations of the position value and the Myerson
value by using the {\em divided link game} and the {\em divided link game with a coalition structure}, respectively. However, we note that the approach due to \cite{ko} for communication situations does not work  directly for hypergraph communication situations.

The main aim of this paper is to  provide both non-axiomatic and axiomatic characterization of the position value for arbitrary hypergraph communication situations.
To complete the non-axiomatic characterization of the position value, we introduce two new  games obtained from the original hypergraph communication situation, called the {\em uniform hyperlink game} and the {\em $k$-augmented uniform hyperlink game}, respectively.
It turns out that the position value for hypergraph communication situations can be represented by the Shapley value of the uniform hyperlink game or the $k$-augmented uniform hyperlink game.
Based on the above non-axiomatic characterizations, an axiomatic characterization for arbitrary hypergraph communication situations is proposed by {\em component efficiency} and a new property, called {\em partial balance conference contributions}. Component efficiency states that for each component of the hypergraph
the total payoff to its players equals the worth of that component.
Partial balanced conference contributions is developed from {\em balanced link contributions} which is used to characterize the position value for (graph) communication situations in \cite{sl}.
The partial balanced conference contributions here deals with the payoff difference a player experiences if another player breaks one of his hyperlinks. The intrinsical difference between the two balanced properties is whether the payoff difference of a player experiences is totally  or partially attributing to another player.

This article is organized as follows.  Basic definitions and notation are given in Section 2.
Section 3 first introduces the uniform hyperlink game and the $k$-augment uniform hyperlink game. By using the two games, we  give two non-axiomatic characterizations of the position value for hypergraph communication situations. Further, we present a key property, called partial balanced conference contributions. Based on the non-axiomatic characterizations, we  provide an axiomatic characterization of the position value for arbitrary hypergraph communication situations by  employing component efficiency and partial balanced conference contributions. Finally, we conclude in Section 4 with some remarks.

\section{Basic definitions and notation}
In this section, we recall some definitions and concepts related to TU-games and allocation rules for hypergraph communication situations.

A {\em cooperative game with transferable utility}, or simply a {\em TU-game}, is a pair $(N,v)$
where $N=\{1, 2,\ldots, n\}$ is a finite set of $n\geq 2$ players and $v: 2^N\rightarrow \mathbf{R}$ is a {\em characteristic
function} defined on the power set of $N$ such that $v(\emptyset)=0$.
For any $S\subseteq N$, $S$ is called a {\em coalition} and the real number $v(S)$ represents its {\em worth}.
A {\em subgame } of $v$ with a nonempty set $T\subseteq N$ is a game $v_T(S)=v(S)$, for all $S\subseteq T$.
We denote by $|S|$  the cardinality of  $S\subseteq N$. A game $(N,v)$ is {\em zero-normalized} if for any $i\in N$, $v(\{i\})=0$. Throughout this paper, we consider only zero-normalized games.

Let $\Sigma(N)$ be the set of all permutations on $N$. For any permutation $\sigma\in \Sigma(N)$, the corresponding marginal vector $m^\sigma(N,v)\in \mathbf{R}^n$ assigns to
every player $i\in N$ a payoff $m^\sigma_i(N, v)=v(\sigma^i\cup\{i\})-v(\sigma^i)$, where $\sigma^i=\{j\in N ~|~ \sigma(j)<\sigma(i)\}$
is the set of players preceding $i$ in the permutation $\sigma$. The best-known single-valued solution,
the {\em Shapley value} (\cite{sha}), assigns to any game $(N,v)$ the average  of all marginal vectors.  Formally, the Shapley value  is defined as
follows.
\begin{eqnarray*}
Sh_i(N,v)=\frac{1}{|\Sigma(N)|}\sum_{\sigma\in \Sigma(N)}m^{\sigma}_i(N, v), \, \, \mbox{ for all }\, \, i\in N.
\end{eqnarray*}
An alternative description of the Shapley value can be provided by employing the {\em Harsanyi dividends}.
First, the {\em unanimity game} $(N,u_T)$ according to $T\subseteq N$ is the game defined by $u_T(S)=1$ if $T\subseteq S$ and $u_T(S)=0$ otherwise (\cite{sha}).
Then each game $(N,v)$ can be written as a unique linear combination of unanimity games, i.e., $v=\sum_{T\in 2^N\setminus\{\emptyset\}}\lambda_T(v)u_T$ where $\lambda_T(v)=\sum_{S\subseteq T: S\neq \emptyset}(-1)^{|T|-|S|}v(S)$ is called Harsanyi dividends (\cite{har}) of the nonempty coalition $T\subseteq N$.
The alternative description of the Shapley value is given as follows.
\begin{eqnarray*}
Sh_i(N,v)=\sum\limits_{T\subseteq N: i\in T}\frac{\lambda_T(v)}{|T|}, \, \text{ for all } i\in N.
\end{eqnarray*}

The communication
possibilities for a TU-game $(N,v)$ can be described by a  (communication) {\em hypergraph} $(N, H)$ where  $H$ is a family
of non-singleton subsets of $N$, i.e., $H\subseteq H^N=\{e\subseteq N \,|\,  |e|>1\}$.
The elements of $N$ are called the {\em nodes} or {\em vertices} of the hypergraph that represent players,
and the elements of $H$ its {\em hyperlinks} or {\em hyperedges} represent {\em conferences}
in which all players in a hyperlink have to be present before communication can take place (\cite{van}).
A hypergraph $(N,H)$ is called  $r$-{\em uniform} if $|e|=r$ for all $e\in H$.
Clearly, a graph $(N, L)$, $L\subseteq L^N=\{e\subseteq N\,|\,  |e|=2\}\subseteq H^N$, is a 2-uniform hypergraph and in this case these hyperlinks are called {\em links}. Therefore,  hypergraphs are a natural generalization of graphs in which ``edges"  may consist of more than 2 nodes.

Let $H_i$ be the set of hyperlinks containing player $i$ in a hypergraph $(N, H)$, i.e.
$H_i=\{e\in H\,|\,  i\in e\}$. The {\em degree} of $i$ is defined as $d(i)=|H_i|$.  A node $i\in N$ is {\em incident}
with a hyperlink $e\in H$, if $i\in e$.
Two nodes $i$ and $j$ of $N$ are {\em adjacent} in the hypergraph $(N,H)$ if there is an hyperlink $e$ in $H$ such that $i,j\in e$. Two nodes $i$ and $j$ are {\em connected} if there exists a sequence $i=i_0, i_{1}, \ldots, i_{k}=j$ of nodes of $(N,H)$ in which $i_{l-1}$  is adjacent to $i_{l}$ for $l=1, 2, \ldots, k$. A {\em connected hypergraph} is a hypergraph in which every pair of nodes are connected.
Given any hypergraph $(N, H)$, a (connected) {\em component} of $(N,H)$  is a maximal set of nodes of $N$ in which every pair of nodes are connected.
Let $N/H$ denote the set of components in $(N,H)$ and $(N/H)_i$ be the component containing $i\in N$. For any $S\subseteq N$, let $(S, H(S))$
be the {\em  subhypergraph} induced by $S$ where $H(S)=\{e\in H\,|\,  e\subseteq S\}$.  A hypergraph $(N, H')$ is called a {\em partial hypergraph} of
$(N, H)$ if $H'\subseteq H$. The notation $S/H(S)$ (or for short $S/H$) and $N/H'$ are defined similarly.

A {\em hypergraph communication situation}, or simply a {\em hypergraph game}, is a triple $(N,v,H)$ where $(N,v)$ is a zero-normalized TU-game
and $H$ is  the set of hyperlinks  in the hypergraph $(N,H)$. In particular,
if $(N,H)$ is a graph, the  triple $(N,v,H)$ is called a  {\em communication situation}, or for short a {\em graph game}.
 Let $HCS^N$ denote the class of all hypergraph communication situations
with fixed player set $N$.

Two values, or allocation rules, were well defined for hypergraph communication situations.
The {\em Myerson value} $\mu$ (\cite{my1, my2}, \cite{van}) is defined by
\begin{eqnarray*}
\mu(N,v, H)=Sh(N, v^H),
\end{eqnarray*}
where $v^H(S)=\sum_{T\in S/H}v(T)$ for any $S\subseteq N$, and the game $(N, v^H)$ is called the {\em  point game} or {\em hypergraph-restricted game}.

An alternative value for hypergraph communication situations, the {\em position value} $\pi$ (\cite{Me}, \cite{van}), is given by
\begin{eqnarray*}
\pi_i(N,v, H)=\sum_{e\in H_i}\frac{1}{|e|}Sh_e(H, v^N), \, \, \mbox{for any}\, \, i\in N,
\end{eqnarray*}
where $v^N(H')=\sum_{T\in N/H'}v(T)$ for any $H'\subseteq H$. The game $(H, v^N)$ is called the {\em conference game} or {\em hyperlink game}.

\section{The characterizations of the position value}
In this section we shall provide two kinds of characterizations of the position value for hypergraph communication situation. The first kind of characterizations is non-axiomatic characterizations
of the position value.
The position value is  presented by the Shapley value of the
  uniform hyperlink game or the  $k$-augmented uniform hyperlink game, which are obtained from the given hypergraph
communication situation.
Another one is an axiomatic characterization, which can be proven by employing component efficiency and a new property, called partial balanced conference contributions.

\subsection{The non-axiomatic characterization of the position value}
In order to show the non-axiomatic characterization, we first introduce
 the {\em uniform hyperlink game} induced by an original hypergraph communication situation. The definition of the uniform hyperlink game follows the spirits of the {\em divided link game} in \cite{ko} and the {\em hyperlink agent form} (HAF) in \cite{ca1}.

\begin{word}\label{de1}
For any $(N,v,H)\in HCS^N$ without isolated player, its {\bf uniform hyperlink game} $(U(H), w)$ is defined as follows: Let $\eta(H)$ denote the least common multiple of the numbers in $\{|e|\,|\,e\in H\}$. Then set
\begin{eqnarray}
&\,&\ U(H)(i,e)=\{(i,e,k)\,|\,k=1,2, \ldots,  \eta(H)\cdot|e|^{-1}\},\\
&\,&\ U(H)(i)=\bigcup_{e\in H_i}U(H)(i,e)\, \text{ and }\,  U(H)(e)=\bigcup_{i\in e}U(H)(i,e), \nonumber\\
&\,&\ U(H)=\bigcup_{i\in N}U(H)(i)=\bigcup_{e\in H}U(H)(e)\\
&\,&\ w(S)=v^N(H[S])=\sum\limits_{R\in N/H[S]}v(R), \, \text{ for all } S\subseteq U(H),
\end{eqnarray}
where $H[S]=\{e\in H \,|\, U(H)(e)\subseteq S\}$.
\end{word}
By the definitions of $(U(H),w)$, it is clear that $U(H)$ is obtain from $(N, H)$ by expanding each $i\in N$ to $\eta(H)\cdot|e|^{-1}$
nodes according to each hyperlink $e\in H_i$.
Every set $U(H)(i,e)$ consists of precisely $\eta(H)\cdot|e|^{-1}$ players of $U(H)$
which are obtained by expanding the player $i\in e$ in hyperlink $e\in H_i$.
$U(H)(i)$ is the set of players obtained by expanding $i\in N$ in all hyperlinks $H_i$ and
$U(H)(e)$ is the set of players  obtained by expanding the members of $e\in H$. Therefore, it is easy to check that
$|U(H)|=\eta(H)\cdot|H|$ and $|U(H)(e)|=\eta(H)$ for any $e\in H$.

{\bf Remark}. The definition of $U(H)$ is similar to the player set of HAF in \cite{ca1}, but there are no hyperlinks or links in $U(H)$ as defined in HAF, and the characteristic functions are different from each other as well. The characteristic function $w$ here follows the idea from the {\em divided link game}, due to \cite{ko}. The uniform hyperlink game generalizes the divided link game for graph games to hypergraph games.

For a hypergraph game $(N, v, H)$, we present the following characterization of the position value in terms of the Shapley value of the uniform hyperlink game $(U(H),w)$.
\begin{theorem}\label{thm1}
For any hypergraph game $(N,v, H)$ and any $i\in N$,
$$\pi_i(N,v,H)=\sum_{l\in U(H)(i)}Sh_l(U(H),w).$$
\end{theorem}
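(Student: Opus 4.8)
The plan is to work with Harsanyi dividends in both the conference game $(H,v^N)$ and the uniform hyperlink game $(U(H),w)$, and to match them up coalition by coalition. First I would recall the dividend formula for the Shapley value: $Sh_l(U(H),w)=\sum_{S\subseteq U(H):\,l\in S}\lambda_S(w)/|S|$. The key structural observation about $w$ is that $w(S)=v^N(H[S])$ depends on $S$ only through the set $H[S]$ of hyperlinks whose entire expanded node set $U(H)(e)$ is contained in $S$; in particular $w$ is monotone and ``lumpy'' — adding a node of $U(H)(i,e)$ changes the value only when it completes the last missing node of some $U(H)(e)$. I expect this to force $\lambda_S(w)=0$ unless $S$ is a union of complete blocks, i.e. $S=U(H)(H')=\bigcup_{e\in H'}U(H)(e)$ for some $H'\subseteq H$, because if $S$ contains a node of $U(H)(e)$ but not all of $U(H)(e)$, then that node is a ``null-like'' coordinate on the interval $[\,\emptyset,S\,]$ and its presence/absence cancels in the alternating sum defining $\lambda_S(w)$.

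Granting that, for $S=U(H)(H')$ one should get $\lambda_S(w)=\lambda_{H'}(v^N)$: indeed $w$ restricted to unions of blocks is exactly the pullback of $v^N$ under the bijection $H'\mapsto U(H)(H')$ between subsets of $H$ and block-unions, and the dividend is an isomorphism-invariant of the underlying lattice, so the dividends transport. Then I would compute, for a fixed original player $i\in N$,
\[
\sum_{l\in U(H)(i)}Sh_l(U(H),w)
=\sum_{l\in U(H)(i)}\ \sum_{H'\subseteq H:\,U(H)(H')\ni l}\frac{\lambda_{H'}(v^N)}{|U(H)(H')|}
=\sum_{H'\subseteq H}\frac{\lambda_{H'}(v^N)}{\eta(H)\,|H'|}\;\bigl|\{l\in U(H)(i):\,l\in U(H)(H')\}\bigr|,
\]
using $|U(H)(H')|=\eta(H)\,|H'|$. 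Now $l\in U(H)(i)\cap U(H)(H')$ means $l\in U(H)(i,e)$ for some $e\in H'\cap H_i$, and for each such $e$ there are exactly $\eta(H)|e|^{-1}$ such nodes $l$, so the count is $\sum_{e\in H'\cap H_i}\eta(H)|e|^{-1}$. Substituting and cancelling $\eta(H)$ gives
\[
\sum_{l\in U(H)(i)}Sh_l(U(H),w)
=\sum_{H'\subseteq H}\frac{\lambda_{H'}(v^N)}{|H'|}\sum_{e\in H'\cap H_i}\frac{1}{|e|}
=\sum_{e\in H_i}\frac{1}{|e|}\sum_{H'\subseteq H:\,e\in H'}\frac{\lambda_{H'}(v^N)}{|H'|}
=\sum_{e\in H_i}\frac{1}{|e|}\,Sh_e(H,v^N),
\]
which is exactly $\pi_i(N,v,H)$ by the definition of the position value.

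The main obstacle is the first claim: that $\lambda_S(w)=0$ whenever $S$ is not a union of complete blocks $U(H)(e)$, and that it equals $\lambda_{H'}(v^N)$ when $S=U(H)(H')$. One clean way to see the vanishing is to fix a node $p\in S$ lying in some block $U(H)(e)$ with $U(H)(e)\not\subseteq S$, and to note that for every $R\subseteq S$ we have $H[R]=H[R\setminus\{p\}]$ (removing $p$ cannot destroy any complete block inside $R$, since $U(H)(e)$ was never complete in $R$), hence $w(R)=w(R\setminus\{p\})$; pairing $R$ with $R\setminus\{p\}$ in the alternating sum $\lambda_S(w)=\sum_{R\subseteq S}(-1)^{|S|-|R|}w(R)$ shows it is $0$. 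For $S=U(H)(H')$ one argues that the sublattice $\{R\subseteq S:\ R\text{ a block-union}\}$ carries all the dividend weight and is order-isomorphic to the subset lattice of $H'$, so Harsanyi inversion over this sublattice yields $\lambda_{U(H)(H'')}(w)=\lambda_{H''}(v^N)$ for all $H''\subseteq H'$; taking $H''=H'$ gives the needed identity. I would also note at the outset that the hypothesis ``without isolated player'' guarantees $U(H)(i)\neq\emptyset$ for every $i$ so the outer sum is over a nonempty set, and that the interchange of the two finite sums above is unproblematic.
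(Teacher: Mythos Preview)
Your proof is correct but takes a genuinely different route from the paper's. The paper argues via permutations and marginal vectors: it defines a surjection $g\colon\Sigma(U(H))\to\Sigma(H)$ by ordering hyperlinks according to when their blocks $U(H)(e)$ are completed under $\sigma$, observes that the only $l\in U(H)(e)$ with nonzero marginal is the one completing the block so that $\sum_{l\in U(H)(e)}m^\sigma_l(U(H),w)=m^{g(\sigma)}_e(H,v^N)$, and averages over $\sigma$ (the fibres of $g$ all have the same size) to obtain $\sum_{l\in U(H)(e)}Sh_l(U(H),w)=Sh_e(H,v^N)$; symmetry of the players inside each block then gives $Sh_l=\eta(H)^{-1}Sh_e$, and summing over $U(H)(i)$ yields $\pi_i$. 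Your dividend computation is more algebraic: the vanishing of $\lambda_S(w)$ on non-block-unions is correct and rests, as you note, on the fact that each node $(i,e,k)$ lies in exactly one block $U(H)(e)$; the transport $\lambda_{U(H)(H')}(w)=\lambda_{H'}(v^N)$, though stated tersely, follows immediately from $w(U(H)(H'))=v^N(H')$, the vanishing claim, and uniqueness of M\"obius inversion over $2^{H}$. The paper's argument is self-contained and lets symmetry inside the block produce the factor $|e|^{-1}$ directly, whereas your approach is structurally transparent and delivers as a by-product the Harsanyi-dividend expression for $\pi_i$ that the paper establishes only afterwards and then exploits in the proof of partial balanced conference contributions.
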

\begin{proof}
Let $g$ be a mapping from $\Sigma(U(H))$ to $\Sigma(H)$: For any two hyperlinks $e_1, e_2\in H$ and any permutation $\sigma\in \Sigma(U(H))$, $g(\sigma)(e_1)<g(\sigma)(e_2)$
if and only if $\max\{\sigma(l)\,|\,  l\in U(H)(e_1)\}<\max\{\sigma(l)\,|\,  l\in U(H)(e_2)\}$.
Therefore, for any $l\in U(H)(e)\subseteq U(H)$,  if $\sigma(l)$=$\max\{\sigma(k)\,|\,  k\in U(H)(e)\}$, then
$$m^{\sigma}_{l}(U(H),w)= m^{g(\sigma)}_{e}(H, v^N),$$
otherwise, we have $m^{\sigma}_{l}(U(H),w)=0$.
Hence,
$$\sum_{l\in U(H)(e)}m^{\sigma}_{l}(U(H),w)=m^{g(\sigma)}_{e}(H, v^N).$$

It is clear that $|\Sigma(U(H))|=\big(\eta(H)\cdot|H|\big)!$ and $|\Sigma(H)|=|H|!$. For
each $\delta\in \Sigma(H)$, $\Sigma(U(H))$ has exactly $q=|\Sigma(U(H))|/|\Sigma(H)|$ permutations $\sigma_1, \sigma_2, \ldots, \sigma_{q}$ such that $g(\sigma_t')=\delta$ for $t=1,2, \ldots, q$.
So, we have
\begin{eqnarray}\label{formu1}
&\, & ~~~~\frac{1}{|\Sigma(U(H))|}\sum_{\sigma\in \Sigma(U(H))}\Big(\sum_{l\in U(H)(e)}m^{\sigma}_{l}(U(H),w)\Big)\nonumber\\
&\, &\ =\sum_{l\in U(H)(e)}\Big(\frac{1}{|\Sigma(U(H))|}\sum_{\sigma\in \Sigma(U(H))}m^{\sigma}_{l}(U(H),w)\Big)\nonumber\\
&\, &\ =\frac{1}{|\Sigma(H)|}\sum_{\delta\in \Sigma(H)}m^{\delta}_{e}\big(H, v^N\big).
\end{eqnarray}
Therefore, by the definition of the Shapley value,  we obtain
\begin{eqnarray}\label{formu5}
\sum_{l\in U(H)(e)}Sh_{l}(U(H), w)=Sh_{e}\big(H,v^N\big).
\end{eqnarray}
Note that $|e|> 1$ for each $e\in H$, so there exist at least two players $l,l'\in U(H)(e)$ such that
$w(S\cup\{l\})=w(S)=w(S\cup\{l'\})$ for any $S\subseteq U(H)\setminus\{l,l'\}$. So $l$ and $l'$ are symmetric in $(U(H),w)$.
From the symmetry of the Shapley value, it follows that $Sh_{l}(U(H),w)=Sh_{l'}(U(H),w)$ for any two
links $l, l'\in U(H)(e)$. By Eq. (\ref{formu5}), for any $l\in U(H)(e)$, we have
$$Sh_{l}(U(H), w)=\frac{1}{\eta(H)}Sh_{e}\big(H,v^N\big).$$
Note that, for each $e\in H$, $|U(H)(e)|=\eta(H)$. Therefore, we have
$$\sum_{l\in U(H)(i,e)}Sh_{l}(U(H), w)=\frac{|U(H)(i,e)|}{\eta(H)}Sh_{e}\big(H,v^N\big)=\frac{1}{|e|}Sh_{e}\big(H,v^N\big).$$
The second equation holds by following Eq. (1). Consequently,
$$\pi_i(N,v,H)=\sum_{e\in H_i}\frac{1}{|e|}Sh_{e}\big(H,v^N\big)= \sum_{e\in H_i}\sum_{l\in U(H)(i,e)}Sh_{l}(U(H), w) =\sum_{l\in U(H)(i)}Sh_l(U(H),w).$$
This completes the proof of Theorem \ref{thm1}. \end{proof}

The following example illustrates  the construction of the uniform hyperlink game and  Theorem \ref{thm1}.
\begin{figure}[h]
\centering
\includegraphics[height=5cm,width=10cm]{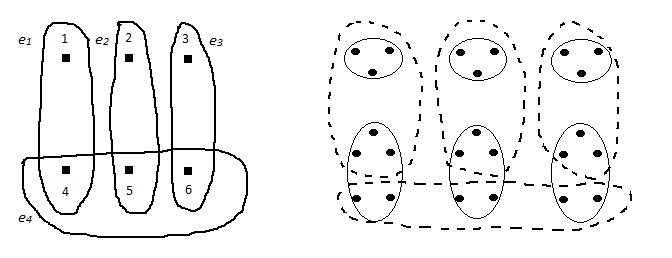}
\caption{The hypergraph $(N,H)$ in Example 1 and its corresponding set $U(H)$.}
\end{figure}

\begin{example}\label{ex1}
Consider the communication game $(N,v,H)$, where $N=\{1,\ldots, 6\}$, $v$ is the unanimity game $u_{\{1,2,3\}}$ and $H=\{\{1,4\},\{2,5\},\{3,6\},\{4,5,6\}\}$. Let $e_1=\{1,4\}, e_2=\{2,5\}, e_3=\{3,6\}, e_4=\{4,5,6\}$. By the definition of $U(H)$, we have the following set (also see Fig. 1)
\begin{eqnarray*}
U(H)\quad =  &\ \{ (1,e_1,1), (1,e_1,2), (1,e_1,3); (4,e_1,1), (4,e_1,2), (4,e_1,3); \\
&\ (2,e_2,1), (2,e_2,2), (2,e_2,3); (5,e_2,1), (5,e_2,2), (5,e_2,3);   \\
&\ (3,e_3,1), (3,e_3,2), (3,e_3,3); (6,e_3,1), (6,e_3,2), (6,e_3,3); \\
&\ \,(4,e_4,1), (4,e_4,2); (5,e_4,1), (5,e_4,2); (6,e_4,1), (6,e_4,2)\, \}.
\end{eqnarray*}
Note that Figure 1 shows the hypergraph $(N,H)$ and its corresponding set $U(H)$.
In the set of $U(H)$, the subsets in which the nodes are encircled by solid lines are developed by the elements of $N$
and indicated by $U(H)(i)$ for all $i\in N$, while the groups where the nodes are encircled by dotted lines
are derived by the players in each hyperlink of $H$ and represented by $U(H)(e)$ for all $e\in H$.

By the definition of $v$, we can calculate the uniform hyperlink game $w$ as follows.
\begin{eqnarray*}
&\,&\ w(S)=\left\{ \begin{array}{cl}
               1 & S=U(H), \\
               0 & \text{else}.
             \end{array}
             \right.
\end{eqnarray*}
Hence, $Sh_l(U(H),w)=\frac{1}{24}$ for all $l\in U(H)$ and
\begin{eqnarray*}
  \pi_1(N,v,H)&=&\sum\limits_{l\in U(H)(1)}Sh_l(U(H),w)=\frac{1}{24}+\frac{1}{24}+\frac{1}{24}=\frac{1}{8}=\pi_2(N,v,H)=\pi_3(N,v,H),\\
  \pi_4(N,v,H)&=&\sum\limits_{l\in U(H)(4)}Sh_l(U(H),w)=3\frac{1}{24}+2\frac{1}{24}=\frac{5}{24}=\pi_5(N,v,H)=\pi_6(N,v,H).
\end{eqnarray*}
So, this example shows that the position value can be expressed by the uniform hyperlink game.
\end{example}

By Definition \ref{de1} and the proof of Theorem \ref{thm1}, we note that $\eta(H)$ is the key to guaranteeing Eq. (\ref{formu1})  in the proof of Theorem \ref{thm1}.
Clearly,
if we consider an integral multiple of $\eta(H)$ instead of $\eta(H)$, Eq. (\ref{formu1}) is still true.
Based on this observation,
we can construct the other induced games from the original hypergraph game and describe the position value for the hypergraph game by the induced games.

\begin{word}\label{de2}
For any $(N,v,H)\in HCS^N$ and any positive integer $k\ge 1$, let $\eta(H)$ is the least common multiple of the numbers in $\{|e|\,|\,e\in H\}$ and $\rho(k)=k\cdot \eta(H)$. The {\bf $k$-augmented uniform hyperlink game} $(U(H)^k, w^k)$ is defined as follows.
\begin{eqnarray}
&\,&\ U(H)^k(i,e)=\{(i,e,t)\,|\,t=1,\ldots,  \rho(k)\cdot|e|^{-1}\},\\
&\,&\ U(H)^k(i)=\bigcup_{e\in H_i}U(H)^k(i,e)\, \text{ and }\,  U(H)^k(e)=\bigcup_{i\in e}U(H)^k(i,e), \nonumber\\
&\,&\ U(H)^k=\bigcup_{i\in N}U(H)^k(i)=\bigcup_{e\in H}U(H)^k(e)\\
&\,&\ w^k(S)=\sum\limits_{R\in N/H[S]}v(R), \, \text{ for all } S\subseteq U(H)^k,
\end{eqnarray}
where $H[S]=\{e\in H \,|\, U(H)^k(e)\subseteq S\}$.
\end{word}
 By the definition  of the $k$-augmented uniform hyperlink game,  we can obtain the following strengthening of Theorem \ref{thm1}.
 Its  proof  is similar to that of Theorem \ref{thm1}, so we omit it.
\begin{theorem}\label{thm2}
For any $(N, v, H)\in HCS^N$ and any $i\in N$,
$$\pi_i(N,v,H)=\sum_{l\in U(H)^k(i)}Sh_l(U(H)^k,w^k).$$
\end{theorem}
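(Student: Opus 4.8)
The plan is to run the argument of Theorem~\ref{thm1} essentially verbatim, performing the single uniform substitution of $\eta(H)$ by $\rho(k)=k\cdot\eta(H)$, and to verify that each step survives. First I would introduce the map $g\colon\Sigma(U(H)^k)\to\Sigma(H)$ exactly as before: for $e_1,e_2\in H$ and $\sigma\in\Sigma(U(H)^k)$, declare $g(\sigma)(e_1)<g(\sigma)(e_2)$ if and only if $\max\{\sigma(l)\,|\,l\in U(H)^k(e_1)\}<\max\{\sigma(l)\,|\,l\in U(H)^k(e_2)\}$. Since the groups $\{U(H)^k(e)\}_{e\in H}$ partition $U(H)^k$ (a triple $(i,e,t)$ lies in $U(H)^k(e)$ only), the maxima over distinct groups are distinct, so $g$ is a well-defined map onto $\Sigma(H)$. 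The same computation as in the proof of Theorem~\ref{thm1} then shows that $m^{\sigma}_{l}(U(H)^k,w^k)=0$ unless $\sigma(l)=\max\{\sigma(l')\,|\,l'\in U(H)^k(e)\}$ for the unique group $U(H)^k(e)$ containing $l$, in which case $m^{\sigma}_{l}(U(H)^k,w^k)=m^{g(\sigma)}_{e}(H,v^N)$; summing over $l\in U(H)^k(e)$ gives $\sum_{l\in U(H)^k(e)}m^{\sigma}_{l}(U(H)^k,w^k)=m^{g(\sigma)}_{e}(H,v^N)$.

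The only point that calls for a fresh check is the counting behind the analogue of Eq.~(\ref{formu1}): I need that every $\delta\in\Sigma(H)$ has the same number $q=|\Sigma(U(H)^k)|/|\Sigma(H)|$ of preimages under $g$. This holds because that number depends only on the multiset $\{\,|U(H)^k(e)|\,:\,e\in H\,\}$, and here all these cardinalities are equal, since $|U(H)^k(e)|=\sum_{i\in e}|U(H)^k(i,e)|=\sum_{i\in e}\rho(k)\cdot|e|^{-1}=\rho(k)$ — an identity that is meaningful precisely because $\rho(k)\cdot|e|^{-1}$ is a positive integer for every $e\in H$, as $\eta(H)$, and hence $\rho(k)=k\eta(H)$, is a common multiple of the hyperlink sizes. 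Averaging the summed-marginal identity over $\Sigma(U(H)^k)$ exactly as in Eq.~(\ref{formu1}) then yields $\sum_{l\in U(H)^k(e)}Sh_{l}(U(H)^k,w^k)=Sh_{e}(H,v^N)$ for every $e\in H$.

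Next I would invoke symmetry: any two players $l,l'\in U(H)^k(e)$ are symmetric in $(U(H)^k,w^k)$, because for $S\subseteq U(H)^k\setminus\{l,l'\}$ neither $S\cup\{l\}$ nor $S\cup\{l'\}$ contains all of $U(H)^k(e)$, so $e\notin H[S\cup\{l\}]=H[S\cup\{l'\}]$, and $l,l'$ belong to no other group; hence $w^k(S\cup\{l\})=w^k(S)=w^k(S\cup\{l'\})$. By symmetry of the Shapley value together with the displayed equality, each $l\in U(H)^k(e)$ receives $Sh_{l}(U(H)^k,w^k)=\frac{1}{\rho(k)}Sh_{e}(H,v^N)$. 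Therefore $\sum_{l\in U(H)^k(i,e)}Sh_{l}(U(H)^k,w^k)=\frac{|U(H)^k(i,e)|}{\rho(k)}Sh_{e}(H,v^N)=\frac{1}{|e|}Sh_{e}(H,v^N)$, and summing over $e\in H_i$ delivers $\sum_{l\in U(H)^k(i)}Sh_{l}(U(H)^k,w^k)=\sum_{e\in H_i}\frac{1}{|e|}Sh_{e}(H,v^N)=\pi_i(N,v,H)$, as desired.

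I expect no genuine obstacle: the whole derivation is the proof of Theorem~\ref{thm1} with $\eta(H)$ uniformly rewritten as $\rho(k)$. The one spot that uses a hypothesis rather than pure bookkeeping is the claim that $g$ distributes preimages evenly over $\Sigma(H)$, and this rests exactly on the fact that $\rho(k)$ remains a common multiple of $\{|e|\,:\,e\in H\}$, which is why the construction was set up this way (cf.\ the remark following Theorem~\ref{thm1}). The symmetry step needs only $|U(H)^k(e)|=\rho(k)\ge 2$, which is automatic since $\eta(H)\ge 2$ and $k\ge1$.
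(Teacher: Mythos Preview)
Your proposal is correct and follows exactly the approach the paper intends: the authors explicitly omit the proof of Theorem~\ref{thm2}, stating that it ``is similar to that of Theorem~\ref{thm1}'', i.e., precisely the uniform replacement of $\eta(H)$ by $\rho(k)=k\cdot\eta(H)$ that you carry out. Your added justification that the preimage count under $g$ is constant because all blocks $U(H)^k(e)$ have the common size $\rho(k)$ is the right reason (and is implicit in the paper's remark preceding Definition~\ref{de2}).
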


Clearly, when $k=1$,  the $k$-augmented uniform hyperlink game coincides with the uniform hyperlink game, and Theorem \ref{thm1} is a special case of Theorem \ref{thm2} as well. This result will serve to characterize the position value  axiomatically  for arbitrary hypergraph games in the next subsection.

\subsection{The axiomatic characterization of the position value}

In this subsection we provide an axiomatic characterization of the position value for arbitrary hypergraph communication situations.

Before we introduce the  properties   that fully characterize the position value, we first give the following key lemma.
\begin{lemma}\label{le1}
For any hypergraph game $(N,v,H)\in HCS^N$, any $i\in N$, $i\in e\in H$ and $l'\in U(H)(e)$, we have
$$\pi_i\big(N,v,H\setminus\{e\}\big)=\sum\limits_{l\in (U(H)\setminus\{l'\})(i)}Sh_l\big(U(H)\setminus\{l'\},w_{U(H)\setminus\{l'\}}\big).$$
\end{lemma}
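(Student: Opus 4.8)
The plan is to compare both sides of the identity to a single Shapley-value expression attached to the restriction of $(U(H),w)$ to the node set $U(H)\setminus U(H)(e)$. For the left-hand side I would invoke the non-axiomatic characterization (Theorem~\ref{thm2}) for the smaller hypergraph game $(N,v,H\setminus\{e\})$; for the right-hand side I would argue that deleting a single node $l'\in U(H)(e)$, rather than the whole block $U(H)(e)$, merely turns the remaining $\eta(H)-1$ nodes of $U(H)(e)$ into null players, whose deletion leaves every other player's Shapley value untouched.

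First I would verify the divisibility $\eta(H\setminus\{e\})\mid\eta(H)$, which holds because $\{|f|:f\in H\setminus\{e\}\}\subseteq\{|f|:f\in H\}$, and set $k:=\eta(H)/\eta(H\setminus\{e\})\ge 1$, so that $\rho(k)=k\cdot\eta(H\setminus\{e\})=\eta(H)$. Applying Theorem~\ref{thm2} to $(N,v,H\setminus\{e\})$ with this $k$ gives
$$\pi_i(N,v,H\setminus\{e\})=\sum_{l\in U(H\setminus\{e\})^k(i)}Sh_l\bigl(U(H\setminus\{e\})^k,w^k\bigr).$$
Since $\rho(k)=\eta(H)$, Definition~\ref{de2} yields $U(H\setminus\{e\})^k(j,f)=\{(j,f,t):t=1,\ldots,\eta(H)/|f|\}=U(H)(j,f)$ for every $f\in H\setminus\{e\}$ and $j\in f$, so, as node sets, $U(H\setminus\{e\})^k=\bigcup_{f\in H\setminus\{e\}}U(H)(f)=U(H)\setminus U(H)(e)$, the blocks $U(H)(f)$ being pairwise disjoint. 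Also, for $S\subseteq U(H)\setminus U(H)(e)$ one has $e\notin H[S]$, whence the worth formulas in Definitions~\ref{de1} and \ref{de2} coincide on such $S$; i.e.\ $w^k=w_{U(H)\setminus U(H)(e)}$. Hence the left-hand side equals $\sum_{l\in U(H\setminus\{e\})^k(i)}Sh_l\bigl(U(H)\setminus U(H)(e),w_{U(H)\setminus U(H)(e)}\bigr)$.

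Next I would treat the right-hand side. In $\bigl(U(H)\setminus\{l'\},w_{U(H)\setminus\{l'\}}\bigr)$ the hyperlink $e$ is never activated, because $l'\in U(H)(e)$ forces $U(H)(e)\not\subseteq S$, hence $e\notin H[S]$, for every $S\subseteq U(H)\setminus\{l'\}$. Thus each node of $U(H)(e)\setminus\{l'\}$ is a null player there, its presence never changing $H[S]$ nor the worth. I would then use the \emph{null-player-out property} of the Shapley value --- a null player has zero Harsanyi dividend in every coalition containing it, so deleting it preserves all other players' Shapley values and assigns it $0$ --- to remove the nodes of $U(H)(e)\setminus\{l'\}$ one by one. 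Because $U(H)(i)\cap U(H)(e)=U(H)(i,e)$ (and $l'\in U(H)(i)$ would force $l'\in U(H)(i,e)$), the surviving copies of $i$ are exactly $U(H)(i)\setminus U(H)(i,e)=\bigcup_{f\in H_i\setminus\{e\}}U(H)(i,f)=U(H\setminus\{e\})^k(i)$. This gives
$$\sum_{l\in(U(H)\setminus\{l'\})(i)}Sh_l\bigl(U(H)\setminus\{l'\},w_{U(H)\setminus\{l'\}}\bigr)=\sum_{l\in U(H\setminus\{e\})^k(i)}Sh_l\bigl(U(H)\setminus U(H)(e),w_{U(H)\setminus U(H)(e)}\bigr),$$
which matches the expression obtained for the left-hand side, completing the argument.

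I expect the principal difficulty to be bookkeeping rather than conceptual: pinning down the node-set identifications exactly --- in particular the case split according to whether $l'$ is a copy of $i$ --- and dealing with the degenerate situations $H\setminus\{e\}=\emptyset$ or $e$ being the only hyperlink through $i$, in which $\pi_i(N,v,H\setminus\{e\})=0$ while every surviving copy of $i$ in $U(H)\setminus\{l'\}$ is null, so the right-hand side is also $0$. I would also record the null-player-out property as a brief preliminary so that the Shapley-value manipulations are self-contained.
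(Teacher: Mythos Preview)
Your proposal is correct and follows essentially the same route as the paper: identify the nodes of $U(H)(e)\setminus\{l'\}$ as null players in the restricted game $(U(H)\setminus\{l'\},w_{U(H)\setminus\{l'\}})$, then invoke Theorem~\ref{thm1}/\ref{thm2} on $(N,v,H\setminus\{e\})$ to match the two sides. The paper splits into the cases $\eta(H\setminus\{e\})=\eta(H)$ and $\eta(H\setminus\{e\})\neq\eta(H)$, whereas you unify them via the divisibility $\eta(H\setminus\{e\})\mid\eta(H)$ and a single choice of $k=\eta(H)/\eta(H\setminus\{e\})$; this is a minor streamlining (and sidesteps the paper's unnecessary claim that $\eta(H)=|e|\cdot\eta(H\setminus\{e\})$ in Case~2), but not a different argument.
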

\begin{proof} To show the result, we distinguish two cases depending on whether or not the least common multiples $\eta(H)$ and $\eta(H\setminus\{e\})$
are the same.

{\em Case 1.} $\eta(H\setminus\{e\})=\eta(H)$. Then, by Theorem \ref{thm1}, we have
\begin{equation}\label{eq9}
\pi_i(N,v,H\setminus \{e\})=\sum\limits_{l\in U(H\setminus \{e\})(i)}Sh_l(U(H\setminus \{e\}),w_{U(H\setminus \{e\})}).
\end{equation}
Therefore, it is sufficient to show the following equality.
\begin{equation}\label{eq10}
 \sum\limits_{l\in U(H\setminus \{e\})(i)}Sh_l(U(H\setminus \{e\}),w_{U(H\setminus \{e\})})=\sum\limits_{l\in (U(H)\setminus\{l'\})(i)}Sh_l(U(H)\setminus\{l'\},w_{U(H)\setminus\{l'\}}).
\end{equation}

 By the definition of $U(H)$, we have $U(H\setminus\{e\})=U(H)\setminus U(H)(e)\subseteq U(H)\setminus\{l'\}$  for any $e\in H$, where $l'\in U(H)(e)$.
 So, for any $K\subseteq U(H)\setminus\{l'\}$,   it follows from the definition of $w$ that
$w_{U(H)\setminus\{l'\}}(K)=w_{U(H\setminus\{e\})}(K\cap U(H\setminus\{e\}))=w(K)$. This means that
 the players in $\overline{U}(H,l')=U(H)\setminus\{l'\}\setminus U(H\setminus\{e\})$ are null players of $w_{U(H)\setminus\{l'\}}$. Hence, it is easy to see that
\begin{equation}\label{eq11}
Sh_l(U(H)\setminus\{l'\},w_{U(H)\setminus\{l'\}})=\left\{\begin{array}{cl}
                    Sh_l(U(H\setminus\{e\}),w_{U(H\setminus\{e\})}) & \text{ if } l\in U(H\setminus\{e\}),\\
                    0 & \text{ if } l\in \overline{U}(H,l').
                  \end{array}
                  \right.
\end{equation}
This implies that Eq. (\ref{eq10}) holds.

{\em Case 2.} $\eta(H\setminus\{e\})\neq\eta(H)$.  Since both $\eta(H)$ and $\eta(H\setminus\{e\})$ are the
least common multiples of the numbers in $\{|e'|\,|\, e'\in H\}$ and $\{|e'|\,|\, e'\in H\setminus\{e\}\}$, respectively, $\eta(H)=|e|\cdot\eta(H\setminus\{e\})$.
Therefore, by Theorem \ref{thm2}, we have
\begin{equation}\label{eq12}
\pi_i(N,v,H\setminus\{e\})=\sum_{l\in U(H\setminus\{e\})^k(i)}Sh_l(U(H\setminus\{e\})^k,w_{U(H\setminus\{e\})^k}^k).
\end{equation}
where $k=|e|$. Thus, it is sufficient to show the following equality.
\begin{equation*}\label{eq13}
 \sum_{l\in U(H\setminus\{e\})^k(i)}Sh_l(U(H\setminus\{e\})^k,w_{U(H\setminus\{e\})^k}^k)=\sum\limits_{l\in (U(H)\setminus\{l'\})(i)}Sh_l(U(H)\setminus\{l'\},w_{U(H)\setminus\{l'\}}).
\end{equation*}
The following proof is similar to  the proof described in Case 1, and is left to the reader.

Summing up the two cases, it completes the proof of Lemma \ref{le1}.
\end{proof}

The position value  can be expressed  by employing the Harsanyi dividends. Firstly, the uniform hyperlink game $(U(H),w)$ associated with a hypergraph communication situations $(N,v,H)$ can be represented by a unique linear combination of unanimity games, i.e.,
\begin{equation}\label{eq4}
w=\sum\limits_{K\subseteq U(H)}\lambda_K(w)u_K.
\end{equation}
 By Theorem \ref{thm1}, the position value for $(N,v,H)$ can  be expressed in terms of the unanimity coefficients, i.e., Harsanyi dividends, of the associated uniform hyperlink game.  Formally, for any $i\in N$, we have
\begin{eqnarray}\label{eq11}
\pi_i(N,v,H)&\ = &\ \sum\limits_{l\in U(H)(i)}Sh_l(U(H),w) \notag \\
&\ = &\ \sum\limits_{l\in U(H)(i)}\sum\limits_{K\subseteq U(H): l\in K}\frac{\lambda_K(w)}{|K|} \notag \\
&\ = &\ \sum\limits_{K\subseteq U(H)}\lambda_K(w)\frac{|K_i|}{|K|}
\end{eqnarray}
where $K_i=K\cap U(H)(i)$ and the second equality follows from the alternative description of the Shapley value.

We now give the following properties for an allocation rule $f$. The first property is a standard property, called {\em component efficiency}, which already was used to characterize the Myerson value for communication situations, including graph games, conference structures and hypergraph games (\cite{my1,my2} and \cite{van}). It was also used to characterize the position value for graph games (\cite{sl}) and cycle-free hypergraph games (\cite{van}).

{\bf Component efficiency}: For any $(N,v,H)\in HCS^N$ and any $T\in N/H$, it holds that
$$\sum\limits_{i\in T}f_i(N,v,H)=v(T).$$

The second property, called {\em partial balanced conference contributions},  is developed from the {\em balanced link contributions}. The balanced link contributions  is used to characterize the position value for graph games in \cite{sl} and can be expressed as follows.

{\bf Balanced link contributions}: For any $(N,v,L)$ and any $i, j\in N$, it holds that
$$\sum\limits_{e\in L_j} [f_i(N,v,L)-f_i(N,v,L\setminus\{e\})]=\sum\limits_{e\in L_i}[f_j(N,v,L)-f_j(N,v,L\setminus\{e\})].$$

A natural extension of the above property to hypergraph games is the ``balanced hyperlink contributions" (or called balanced conference contributions), which can be given by

{\bf Balanced conference contributions}: For any $(N,v,H)$ and any $i, j\in N$, it holds that
$$\sum\limits_{e\in H_j} [f_i(N,v,L)-f_i(N,v,L\setminus\{e\})]=\sum\limits_{e\in H_i}[f_j(N,v,L)-f_j(N,v,L\setminus\{e\})].$$
However, we note that the obvious extension  fails
to characterize the position value axiomatically
for hypergraph games and this point will be shown in Example 2.
For solving the characterization problem of the position value for hypergraph games, we introduce the partial balanced conference contributions.
The partial balanced conference contributions also deals with the gains players contribute to each other.  When a hyperlink related to a player is broken or built, the threat or contribution of another player received is not only depending on the first player, but also depending on those players whom adjacent to the first player according to the broken or built hyperlink.
Formally, the property can  be expressed as follows.

{\bf Partial balanced conference contributions}: For any $(N,v,H)\in HCS^N$ and any $i, j\in N$, it holds
$$\sum\limits_{e\in H_j}\frac{1}{|e|}[f_i(N,v,H)-f_i(N,v,H\setminus\{e\})]=\sum\limits_{e\in H_i}\frac{1}{|e|}[f_j(N,v,H)-f_j(N,v,H\setminus\{e\})].$$

The partial balanced conference contributions states that the contribution or threat from a player towards another player equals the reverse contribution or threat, where the contribution or threat of a player towards another player is the sum of a portion payoff differences a player can inflict on another player by building or breaking one of his hyperlinks.
  In particular, if $H$ is $r$-uniform, then the property coincides with the balanced conference contributions. But, in general, this property  is obviously different from the balanced hyperlink contributions.

We are ready to show that the position value for the hypergraph games satisfies the two properties we mentioned above.
\begin{lemma}\label{le2}
The position value for hypergraph communication situations satisfies component efficiency and partial balanced conference contributions.
\end{lemma}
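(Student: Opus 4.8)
The plan is to verify the two properties separately, relying on Theorem~\ref{thm1} to translate statements about $\pi$ into statements about the Shapley value of the uniform hyperlink game, where standard Shapley-value machinery applies.

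For \textbf{component efficiency}, I would start from the definition $\pi_i(N,v,H)=\sum_{e\in H_i}\frac{1}{|e|}Sh_e(H,v^N)$ and sum over $i\in T$ for a fixed component $T\in N/H$. Interchanging the order of summation, each hyperlink $e$ with $e\subseteq T$ is counted once for each of its $|e|$ endpoints, so its coefficient becomes $|e|\cdot\frac{1}{|e|}=1$; hyperlinks meeting $T$ in a proper subset cannot occur since $T$ is a component, so only hyperlinks $e\in H(T)$ contribute. Thus $\sum_{i\in T}\pi_i(N,v,H)=\sum_{e\in H(T)}Sh_e(H,v^N)$. Now the hyperlinks in $H(T)$ form precisely one component of the hyperlink structure on $H$ (the conferences inside $T$ are exactly those connected to each other via shared vertices in $T$), and efficiency of the Shapley value applied to the game $(H,v^N)$ on that component — using that $v^N$ decomposes additively over components of partial hypergraphs — gives $\sum_{e\in H(T)}Sh_e(H,v^N)=v^N(H(T))=v(T)$, since $N/H(T)$ has $T$ as its only nontrivial class and $v$ is zero-normalized. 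One clean alternative is to invoke component efficiency of the Myerson-style argument already in \cite{van} for the position value; but the direct computation above is self-contained.

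For \textbf{partial balanced conference contributions}, the key is Theorem~\ref{thm1} together with Lemma~\ref{le1}. Using Eq.~(\ref{eq11}), write $\pi_i(N,v,H)=\sum_{K\subseteq U(H)}\lambda_K(w)\frac{|K_i|}{|K|}$, and by Lemma~\ref{le1}, for $e\in H_j$ and any chosen $l'\in U(H)(e)$, $\pi_i(N,v,H\setminus\{e\})=\sum_{l\in(U(H)\setminus\{l'\})(i)}Sh_l(U(H)\setminus\{l'\},w_{U(H)\setminus\{l'\}})$. The natural route is to pick $l'=l'_e\in U(H)(j,e)$ for each $e\in H_j$ (this is legitimate since $j\in e$), and then use the standard identity relating $Sh_l$ in a game to $Sh_l$ in the game with one player removed: for the full game $(U(H),w)$ and any player $l'$, one has $Sh_l(U(H),w)-Sh_l(U(H)\setminus\{l'\},w_{U(H)\setminus\{l'\}})=\sum_{K\subseteq U(H):\,l,l'\in K}\frac{\lambda_K(w)}{|K|(|K|-1)}$ for $l\neq l'$. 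Summing this over $l\in U(H)(i)$ gives $\pi_i(N,v,H)-\pi_i(N,v,H\setminus\{e\})=\sum_{K:\,l'_e\in K}\lambda_K(w)\frac{|K_i|}{|K|(|K|-1)}$ (the term $l=l'_e$ does not arise because $l'_e\notin U(H)(i)$ when $i\neq j$; the diagonal case $i=j$ is trivial). Now sum over $e\in H_j$ with weight $\frac{1}{|e|}$. The crucial combinatorial fact is that $\sum_{e\in H_j}\frac{1}{|e|}\,[l'_e\in K]$, summed against $\lambda_K(w)$, should come out symmetric in $i$ and $j$; this follows from the symmetry built into $w$: all $|U(H)(e)|=\eta(H)$ expansion-players of a hyperlink $e$ are symmetric in $w$, so a dividend $\lambda_K(w)$ is nonzero only for sets $K$ that are unions of full blocks $U(H)(e)$, and for such a $K$, ``$l'_e\in K$'' is equivalent to ``$e$'s block is one of the blocks composing $K$,'' while $|K_i|=\sum_{e:\,U(H)(e)\subseteq K}|U(H)(i,e)|=\sum_{e:\,U(H)(e)\subseteq K,\,i\in e}\eta(H)|e|^{-1}$. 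Plugging these in, the left-hand side becomes $\sum_{K}\frac{\lambda_K(w)}{|K|(|K|-1)}\sum_{e,e':\,U(H)(e),U(H)(e')\subseteq K}\frac{\eta(H)}{|e|}\cdot\frac{[\,i\in e'\,]\,\eta(H)}{|e'|}$ — manifestly symmetric under swapping $i\leftrightarrow j$, which is exactly the desired equality.

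The \textbf{main obstacle} is the bookkeeping in the last step: one must be careful that $\lambda_K(w)\neq 0$ forces $K$ to be a union of complete hyperlink-blocks $U(H)(e)$ (this is where the symmetry argument from the proof of Theorem~\ref{thm1} — that there are always at least two symmetric players in each $U(H)(e)$ — is used, iterated to conclude all of $U(H)(e)$ is ``atomic'' for dividends), and that the choice of representatives $l'_e$ genuinely does not matter, again by symmetry. A secondary subtlety is handling \emph{Case 2} of Lemma~\ref{le1}, where $\eta(H\setminus\{e\})\neq\eta(H)$ and one passes to the $k$-augmented game; but since Theorems~\ref{thm1} and~\ref{thm2} give the same value $\pi_i$, the identity for $\pi_i(N,v,H)-\pi_i(N,v,H\setminus\{e\})$ in terms of dividends of $w$ is unaffected, and the symmetric-representative computation goes through verbatim. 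Once these structural facts about the dividends of $w$ are in hand, both properties drop out by symmetry and linearity with no further computation.
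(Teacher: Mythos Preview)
Your overall strategy matches the paper's: for partial balanced conference contributions, both you and the paper use Theorem~\ref{thm1}, Lemma~\ref{le1}, and the dividend expansion of $w$ to reduce the left-hand side to the symmetric expression $\tfrac{1}{\eta(H)}\sum_{K\subseteq U(H)}\lambda_K(w)\frac{|K_i|\,|K_j|}{|K|}$, after which symmetry in $i,j$ is immediate. Component efficiency is handled by the paper via a citation to \cite{van}; your direct computation is fine.

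There is, however, a genuine error in your ``standard identity''. For the subgame $w_{U(H)\setminus\{l'\}}$ one has $\lambda_K(w_{U(H)\setminus\{l'\}})=\lambda_K(w)$ for every $K\subseteq U(H)\setminus\{l'\}$, so
\[
Sh_l(U(H),w)-Sh_l\big(U(H)\setminus\{l'\},w_{U(H)\setminus\{l'\}}\big)=\sum_{K\ni l,\,l'}\frac{\lambda_K(w)}{|K|},
\]
with $|K|$ in the denominator, not $|K|(|K|-1)$. Fortunately this does not spoil your conclusion, since the resulting expression is still symmetric in $i,j$; but the formula as written is wrong and should be corrected. Your final displayed double sum also drops the indicator $[j\in e]$ and mismatches the $\eta(H)$ factors; with the correct identity above these tidy up to $\tfrac{1}{\eta(H)}\sum_K\lambda_K(w)\frac{|K_i||K_j|}{|K|}$.

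The one place where the paper is genuinely slicker: instead of fixing a single representative $l'_e\in U(H)(j,e)$ and then invoking the block structure of the dividends of $w$ to argue the choice is irrelevant, the paper absorbs the weight $\tfrac{1}{|e|}$ via the identity $\tfrac{1}{|e|}=\tfrac{|U(H)(j,e)|}{\eta(H)}$ and sums over \emph{all} $l\in U(H)(j,e)$. This turns $\sum_{e\in H_j}\tfrac{1}{|e|}[\cdots]$ directly into $\tfrac{1}{\eta(H)}\sum_{l\in U(H)(j)}\sum_{K\ni l}\lambda_K(w)\frac{|K_i|}{|K|}=\tfrac{1}{\eta(H)}\sum_K\lambda_K(w)\frac{|K_i||K_j|}{|K|}$, with no need for your auxiliary claim that nonzero dividends are supported on unions of full blocks $U(H)(e)$. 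Your block-structure claim is correct and provable (pair $S$ with $S\setminus\{l\}$ in the M\"obius sum), but it is an extra step that the paper's re-weighting trick avoids entirely.
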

\begin{proof}
It has been verified that the position value satisfies component efficiency  by \cite{van}.
We next show that the position value satisfies partial balanced conference contributions.
 Let $(N,v,H)\in HCS^N$ and $i,j\in N$ such that $i\neq j$. Then we have
\begin{eqnarray*}
&\,&\  \sum\limits_{e\in H_j}\frac{1}{|e|}[\pi_i(N,v,H)-\pi_i(N,v,H\setminus\{e\})] \\
&\,&\  =\sum\limits_{e\in H_j}\frac{|U(H)(j,e)|}{\eta(H)}[\pi_i(N,v,H)-\pi_i(N,v,H\setminus\{e\})]\\
&\,&\  =\frac{1}{\eta(H)}\sum\limits_{e\in H_j}\sum_{l\in U(H)(j,e)}\Big[\sum\limits_{K\subseteq U(H)}\lambda_K(w)\frac{|K_i|}{|K|}-\sum\limits_{K\subseteq U(H)\setminus\{l\}}\lambda_K(w)\frac{|K_i|}{|K|}\Big]\\
&\,&\ =\frac{1}{\eta(H)}\sum\limits_{l\in U(H)(j)}\sum\limits_{K\subseteq U(H):l\in K}\lambda_K(w)\frac{|K_i|}{|K|}\\
&\,&\ =\frac{1}{\eta(H)}\sum\limits_{K\subseteq U(H)}|K_j|\lambda_K(w)\frac{|K_i|}{|K|}\\
&\,&\ = \frac{1}{\eta(H)}\sum\limits_{K\subseteq U(H)}\lambda_K(w)\frac{|K_i|\cdot|K_j|}{|K|}\\
&\,&\ = \sum\limits_{e\in H_i}\frac{1}{|e|}[\pi_j(N,v,H)-\pi_j(N,v,H\setminus\{e\})],
\end{eqnarray*}
where the first equality follows from the definition of $U(H)(j,e)$ and $l\in U(H)(j,e)$, the second equality follows  from Eq. (\ref{eq11}) and Lemma \ref{le1} (note that $\lambda_K(w_{U(H)\setminus\{l\}})=\lambda_K(w)$ for any $K\subseteq {U(H)\setminus\{l\}}\subseteq U(H)$), and the last equality follows from the symmetry of player $i$ and $j$.
\end{proof}
The following example illustrates that the position value satisfies the partial balanced conference contributions.
\begin{example}
Consider the hypergraph game as described in Example \ref{ex1}.  The payoffs for several (sub-)hypergraphs according to the position value are shown as follows.
\begin{eqnarray*}
\pi(N,v,A)=\left\{
\begin{array}{cc}
  (\frac{1}{8},\frac{1}{8},\frac{1}{8},\frac{5}{24},\frac{5}{24},\frac{5}{24}) & A=H, \\
  (0,0,0,0,0,0) & A\subsetneq H.
\end{array}
\right.
\end{eqnarray*}
By the definition of the partial balanced conference contributions, the total contribution of player 6 to player 1 equals $\frac{1}{2}(\frac{1}{8}-0)+\frac{1}{3}(\frac{1}{8}-0)=\frac{5}{48}$, by breaking the hyperlink $e_3$ and $e_4$, respectively. The reverse contribution of player 1 to player 6 equals $\frac{1}{2}(\frac{5}{24}-0)=\frac{5}{48}$ as well. Hence, the position value satisfies the partial balanced conference contributions. However, this example also shows that the position value does not satisfies the balanced conference contributions. To be specific, the contribution of player 6 to player 1 equals $(\frac{1}{8}-0)+(\frac{1}{8}-0)=\frac{2}{4}$, while the reverse contribution of player 1 to player 6 equals $\frac{5}{24}-0=\frac{5}{24}$.
\end{example}

We now can provide a characterization of the position value by  Lemma \ref{le2}. Its proof is similar to the proof of Theorem 3.1 in \cite{sl} which characterizes the position value for arbitrary (graph) communication situations.
\begin{theorem}\label{th2}
The position value for hypergraph communication situations is the unique allocation rule that satisfies component efficiency and partial balanced conference contributions.
\end{theorem}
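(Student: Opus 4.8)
The plan is to follow the template of Slikker's proof of the analogous result for graph games (Theorem~3.1 in \cite{sl}), adapted to the $|e|^{-1}$-weighted axiom. Existence is already available: Lemma~\ref{le2} shows that the position value $\pi$ satisfies component efficiency and partial balanced conference contributions. So it remains to prove uniqueness, i.e.\ that any allocation rule $f$ on $HCS^N$ satisfying both properties coincides with $\pi$. I would argue by induction on the number of hyperlinks $|H|$. For the base case $|H|=0$ every player forms a singleton component, so component efficiency together with zero-normalization forces $f_i(N,v,\emptyset)=v(\{i\})=0=\pi_i(N,v,\emptyset)$ for every $i\in N$.

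For the inductive step, fix $(N,v,H)$ with $|H|=m\ge 1$ and assume $f(N,v,H')=\pi(N,v,H')$ whenever $|H'|<m$; in particular this holds for every $H'=H\setminus\{e\}$, $e\in H$. Write $c_i=\sum_{e\in H_i}|e|^{-1}$ and $z_i=f_i(N,v,H)-\pi_i(N,v,H)$, and work inside a fixed component $T\in N/H$. If $|T|=1$ its unique member is isolated, and component efficiency again gives $z_i=0$. If $|T|\ge 2$, then for any $i,j\in T$ the partial balanced conference contributions property applied to $f$ reads
\[
c_j\,f_i(N,v,H)-c_i\,f_j(N,v,H)=\sum_{e\in H_j}\frac{1}{|e|}\,f_i(N,v,H\setminus\{e\})-\sum_{e\in H_i}\frac{1}{|e|}\,f_j(N,v,H\setminus\{e\}),
\]
and the identical identity holds with $\pi$ in place of $f$ by Lemma~\ref{le2}. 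By the inductive hypothesis the two right-hand sides are literally equal, so subtracting yields $c_j z_i=c_i z_j$ for all $i,j\in T$.

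Since $T$ is a non-trivial component, every $i\in T$ lies in at least one hyperlink, hence $c_i>0$; fixing a reference player $i_0\in T$ one gets $z_i=(z_{i_0}/c_{i_0})\,c_i$ for all $i\in T$, i.e.\ $z_i=\beta c_i$ for a single scalar $\beta$. Component efficiency for both $f$ and $\pi$ gives $\sum_{i\in T}z_i=v(T)-v(T)=0$, hence $\beta\sum_{i\in T}c_i=0$; as $\sum_{i\in T}c_i>0$ this forces $\beta=0$ and therefore $z_i=0$ for all $i\in T$. Ranging over all components of $(N,H)$ yields $f(N,v,H)=\pi(N,v,H)$, completing the induction.

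The only points that need care are: (i) confirming $c_i>0$ for players in components of size at least two, which follows from connectedness together with the fact that every hyperlink meeting $T$ is entirely contained in $T$; and (ii) invoking the inductive hypothesis only on hypergraphs with strictly fewer hyperlinks — note that the possible change of $\eta(H)$ upon deleting a hyperlink plays no role here, having already been absorbed in Lemmas~\ref{le1} and~\ref{le2}. I expect the derivation of the relation $c_j z_i=c_i z_j$ and its conversion into $z_i=\beta c_i$ to be the crux; once that linear dependence is in place, the efficiency normalization closes the argument at once.
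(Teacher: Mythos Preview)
Your proof is correct and follows essentially the same route as the paper: induction on $|H|$, with component efficiency handling the base case and the inductive step reducing, via the partial balanced conference contributions for both $f$ and $\pi$ together with the inductive hypothesis, to a linear system over each component that is pinned down by the efficiency constraint. The paper writes this system out explicitly and asserts it has a unique solution, whereas you make the linear algebra explicit by deriving $c_j z_i = c_i z_j$ and concluding $z_i=\beta c_i$ with $\beta=0$; this is the same argument, just with the verification filled in.
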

\begin{proof} By Lemma \ref{le2},
it is proved that the position value for hypergraph games satisfies component efficiency and partial balanced conference contributions.
 It remains
to show  that  the position value is the unique value that satisfies the two properties.  Suppose $f$ is an allocation rule satisfies the two properties, we show that
$f=\pi$.
We proceed by induction on  $|H|$.
For $|H|=0$, the assertion  immediately follows from component efficiency. Next we may assume that $f$ coincides with the position value $\pi$ if $|H|\leq k-1$. We consider the case when $|H|=k$.
For any component $C\in N/H$, let  $C=\{1,2,\ldots, c\}$.  We can obtain the following system of linearly independent equations by the two properties and the hypothesis,
\begin{eqnarray*}
&\,&\ \sum\limits_{e\in H_2}\frac{1}{|e|}f_1(H)-\sum\limits_{e\in H_1}\frac{1}{|e|}f_2(H)=\sum\limits_{e\in H_2}\frac{1}{|e|}\pi_1(H\setminus\{e\})-\sum\limits_{e\in H_1}\frac{1}{|e|}\pi_2(H\setminus\{e\}),\\
&\,&\\
&\,&\ \cdots \\
&\,&\ \sum\limits_{e\in H_c}\frac{1}{|e|}f_1(H)-\sum\limits_{e\in H_1}\frac{1}{|e|}f_c(H)=\sum\limits_{e\in H_c}\frac{1}{|e|}\pi_1(N,v,H\setminus\{e\})-\sum\limits_{e\in H_1}\frac{1}{|e|}\pi_c(H\setminus\{e\}),\\
&\,&\\
&\,&\ \sum\limits_{i\in T}f(N,v,H)=v(T),
\end{eqnarray*}
where write $f_i(H)$ and $f_i(H\setminus\{e\})$ instead of $f_i(N,v,H)$ and  $f_i(N,v,H\setminus\{e\})$, respectively, for $i=1,2,\ldots, c$. One may easily verify that the above system has a unique solution. Since the position value satisfies component efficiency and partial balanced conference contributions,  the position value is  a solution of the above system.
Consequently, we conclude that $f=\pi$ for any hypergraph communication situations with $|H|=k$.
\end{proof}

\section{Concluding remarks}
In this paper we provides the non-axiomatic  characterization and axiomatic characterization of the position value for arbitrary hypergraph communication situations.
 Here the non-axiomatic  characterization is in line with the works of \cite{ca1} and \cite{ko}, in which non-characterizations of the position are provided by the Myerson value and the Shapley value of modifications of communication situations, respectively.  \cite{ca1} expressed the position value for hypergraph communication situations in terms of the Myerson value by applying the hyperlink agent form (HAF).
 We now give a comparison between the expressions of the position value in \cite{ca1} and in our paper.

We first recall the definition of the hyperlink agent from $HAF(N,v,H)=(\bar{N},\bar{v},\bar{H})$, where the player set is \\
\begin{equation*}
\bar{N}=\bigcup_{i\in N} \bar{N}(i), \quad \bar{N}(i)=\bigcup_{h\in H_i}\bar{N}(i,h), \quad \bar{N}(i,h)=\{(i,h,k)\,|\, k=1,2,\ldots,  \eta(H)\cdot|h|^{-1}\},
\end{equation*}
and the set of hyperlinks is
\begin{equation*}
\bar{H}=\bar{H}^o\cup\bigcup_{i\in N}L^{\bar{N}(i)}, \quad \bar{H}^o=\{\bar{h}\,|\, h\in H\}, \quad \bar{h}=\bigcup_{i\in h}\bar{N}(i,h),
\end{equation*}
and the characteristic function is
\begin{equation*}
\bar{v}(\bar{K})=v(N(\bar{K})), \quad N(\bar{K})=\{i\in N\,|\, \bar{N}(i)\cap\bar{K}\neq \emptyset\}.
\end{equation*}
According to the definitions of the HAF and the uniform hyperlink game, it is easy to check that the differences between the two induced games lies in two aspects: the structures and the characteristic functions. However, somewhat surprisingly, we have the following relationships between them.
\begin{corollary}
For any $(N,v,H)\in HCS^N$, and the uniform hyperlink game $(U(H),w)$, hyperlink agent form (HAF) $(\bar{N},\bar{v},\bar{H})$ defined on it, we have $Sh(U(H),w)=\mu(\bar{N},\bar{v},\bar{H})$.
\end{corollary}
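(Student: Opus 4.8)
The plan is to prove the stronger statement that the two derived games coincide, i.e.\ $(U(H),w)=(\bar N,\bar v^{\bar H})$ as TU-games, from which $Sh(U(H),w)=Sh(\bar N,\bar v^{\bar H})=\mu(\bar N,\bar v,\bar H)$ is immediate by the definition of the Myerson value. The first step is bookkeeping: comparing Definition~\ref{de1} with the definition of the HAF, the player sets are literally equal, $U(H)=\bar N$, with $U(H)(i,e)=\bar N(i,e)$ for every $i\in e\in H$, and the expanded conferences agree, $U(H)(e)=\bar h$. Hence it remains only to check $w(S)=\bar v^{\bar H}(S)$ for every $S\subseteq U(H)=\bar N$, where $\bar v^{\bar H}(S)=\sum_{T\in S/\bar H}\bar v(T)=\sum_{T\in S/\bar H}v(N(T))$ and $w(S)=\sum_{R\in N/H[S]}v(R)$.

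The core of the argument is a value-preserving bijection between the ``non-trivial'' components on the two sides. Within $S$, the link set $L^{\bar N(i)}$ makes $\bar N(i)\cap S$ a clique, so all of its members lie in a single component of $(S,\bar H(S))$; and an original conference $\bar h$ ($h\in H$) is active in $S$ exactly when $U(H)(h)\subseteq S$, i.e.\ when $h\in H[S]$. Tracing a path in $(S,\bar H(S))$ and recording the groups $\bar N(\cdot)$ it passes through, one sees that a group-change can occur only along some $\bar h$ with $h\in H[S]$, and such a $\bar h$ links $\bar N(i,h)$ and $\bar N(j,h)$ precisely when $i,j\in h$; conversely, a path in $(N,H[S])$ lifts to a path in $(S,\bar H(S))$ through the corresponding expanded conferences and cliques. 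This yields: if $T\in S/\bar H$ has $|N(T)|\ge2$, then $N(T)$ lies in one component of $(N,H[S])$; and for every $R\in N/H[S]$ with $|R|\ge2$ the set $\bigcup_{i\in R}(\bar N(i)\cap S)$ (nonempty for each such $i$, since $i$ lies in some $h\in H[S]$) is contained in one component $T$ with $N(T)\supseteq R$. Combining the two directions gives $N(T)=R$, and since $\bigcup_{i\in R}(\bar N(i)\cap S)$ forces $T$ to be unique, $T\mapsto N(T)$ is a bijection from $\{T\in S/\bar H:|N(T)|\ge2\}$ onto $\{R\in N/H[S]:|R|\ge2\}$ with $\bar v(T)=v(N(T))=v(R)$.

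It then remains to dispose of the remaining components, which is where zero-normalization enters. Every component $T\in S/\bar H$ with $|N(T)|=1$ satisfies $\bar v(T)=v(\{i\})=0$ (this covers both singletons and ``leftover'' cliques inside a single $\bar N(i)$), and every singleton component $R\in N/H[S]$ contributes $v(R)=0$ to $w(S)$. Hence both $\bar v^{\bar H}(S)$ and $w(S)$ reduce to the same sum $\sum v(R)$ over the non-trivial components $R\in N/H[S]$, so $w(S)=\bar v^{\bar H}(S)$. Since the games are identical on the identical player set, their Shapley values agree, giving $Sh(U(H),w)=Sh(\bar N,\bar v^{\bar H})=\mu(\bar N,\bar v,\bar H)$.

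I expect the main obstacle to be the careful verification of the bijection -- specifically, arguing that a path in the graph $\bar H(S)$ switches between distinct groups $\bar N(i)$ only along a genuine expanded conference $\bar h$ and never along an intra-group link, so that connectivity in $(S,\bar H(S))$ is faithfully mirrored by connectivity in the partial hypergraph $(N,H[S])$. Once that correspondence is pinned down, everything else is routine.
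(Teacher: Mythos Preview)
Your proposal is correct and follows essentially the same strategy as the paper: identify the player sets $U(H)=\bar N$ and then verify $w(S)=\bar v^{\bar H}(S)$ for every $S$, so that the Shapley and Myerson values must agree. The paper compresses your bijection argument into the single line $\sum_{C\in S/\bar H}v(N(C))=\sum_{C\in N(S)/H[S]}v(C)=\sum_{C\in N/H[S]}v(C)$, invoking the definitions of $\bar H$ and $H[S]$ for the first step and zero-normalization for the second; your treatment simply unpacks that first step more carefully.
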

\begin{proof}
By the definition of the player sets $U(H)$ and $\bar{N}$ and the Myerson value, it is sufficient to show that $w(S)=\bar{v}^{\bar{H}}(S)$ for any $S\subseteq U(H)=\bar{N}$.

In fact, for any $S\subseteq \bar{N}$, we have
\begin{equation*}
\bar{v}^{\bar{H}}(S)=\sum_{C\in S/\bar{H}}\bar{v}(C)=\sum_{C\in S/\bar{H}}v(N(C))=\sum_{C\in N(S)/H[S]}v(C)=\sum_{C\in N/H[S]}v(C)=w(S),
\end{equation*}
where the third equation follows from the definition of $\bar{H}$ and $H[S]$, and the fourth equation holds by the zero-normalized game.
\end{proof}
Even though this corollary shows that the Shapley payoff of the uniform hyperlink game and the Myerson payoff of the HAF coincide with each other, the uniform hyperlink game seems
to be more concise than HAF, more importantly, the uniform hyperlink game can provide a powerful assistance in characterizing the position value for arbitrary hyperlink communication situations.

\small


\begin{thebibliography}{99}

\bibitem[Algaba et al.(2000)]{al}Algaba E, Bilbao JM, Borm P, L\'opez JJ (2000) The position value for union stable systems. Math Methods
Oper Res 52:221--236



\bibitem[B\'eal et al.(2012)]{brs} B\'eal S, R\'emila E, Solal P (2012) Fairness and fairness for neighbors: The difference between the
Myerson value and component-wise egalitarian solutions. Economics Letters 117(1):263--267


\bibitem[Born et al.(1992)]{Bo} Borm P, Owen G, Tijs S (1992) On the position value for communication situations. SIAM J Discr Math
5:305--320

\bibitem[Casajus(2007)]{ca1} Casajus A (2007) The position value is the Myerson value, in a sense. Int J Game Theory 36:47--55



\bibitem[Harsanyi(1959)]{har} Harsanyi JC (1959) A bargaining model for cooperative $n$-person games. In: Tucker AW, Luce RD (eds) Contributions to the theory of games IV. Princeton University Press, Princeton, pp 325--355

\bibitem[Herings et al.(2008)]{he}Herings PJJ, van der Laan G ,Talman AJJ (2008) The average tree solution for cycle-free graph games. Games Econ Behav 62(1): 77--92




\bibitem[Kongo(2010)]{ko} Kongo T (2010) Difference between the position value and the Myerson
value is due to the existence of coalition structures. Int J Game Theory (2010) 39:669--675

\bibitem[Meessen(1988)]{Me}Meessen R (1988) Communication games, Master¡¯s thesis, Department of Mathematics. University of
Nijmegen, the Netherlands (in Dutch)

\bibitem[Myerson(1977)]{my1}Myerson RB (1977) Graphs and cooperation in games. Math Oper Res 2:225--229

\bibitem[Myerson(1980)]{my2} Myerson RB (1980) Conference structures and fair allocation rules. Int J Game Theory 9:169--182





\bibitem[Shan et al.(2016)]{szd} Shan E, Zhang G, Dong Y (2016) Component-wise proportional solutions for communication graph games. Mathematical Social Sciences 81:22--28

\bibitem[Shapley(1953)]{sha} Shapley LS (1953) A value for $n$-person games. In: Kuhn H, Tucker AW (eds) Contributions to the Theory
of Games II. Princeton University Press, Princeton, pp 307--317

\bibitem[Slikker(2005)]{sl} Slikker M (2005) A characterization of the position value. Int J Game Theory 33:505--514


\bibitem[van den Brink et al.(2012)]{bkl} van den Brink R, Khmelnitskaya A, van der Laan G (2012) An efficient and fair solution for communication graph games. Economics Letter 117(3):786--789

\bibitem[van den Brink et al.(2011)]{blp} van den Brink R, van der Laan G, Pruzhansky V (2011) Harsanyi power solutions for graph-restricted games. Int J Game Theory 40:87--110


\bibitem[van den Nouweland et al.(1992)]{van} van den Nouweland A, Borm P, Tijs S (1992) Allocation rules for hypergraph communication situations.
Int J Game Theory 20:255--268


\end{thebibliography}
\end{document}